\documentclass[microtype]{gtpart}
\usepackage[all]{xy}
\usepackage[parfill]{parskip}
\usepackage{amsmath}
\usepackage{amscd, latexsym, hyperref, rlepsf, times}
\usepackage{color,graphicx}
\setlength{\textwidth}{5.2in} \setlength{\textheight}{7.5in}
%%%%%%%%%%%%%%%%%%%%%%%%%%%%%%%%%%%%%%%%%%%%%%%%%%%%%%%%%%%%%%%%%%%%
\newtheorem{dummy}{anything}[section]
\newtheorem{theorem}[dummy]{Theorem}

\newtheorem{corollary}[dummy]{Corollary}

\theoremstyle{definition}%%Change Theoremstyle

\newtheorem{remark}[dummy]{Remark}

%%%%%%%%%%%%%%%%%%%%%%%%%%%%%%%%%%%%%%%%%%%%%%%%%%%%%%%%%%%%%%%%%%%%%%%

%%%%%%%%%%%%%%%%%%%%%%%%%%%%%%%%%%%%%%%%%%%%%%%%%%%%%%%%%%%%%%%%%%%%%
 
%%%%%%%%%%%%%%%%%%%%%%%%%%%%%%%%%%%%%%%%%%%%%%%%%%%%%%%%%%%%%%%%%%%%%

\newcommand{\bfc}{{\mathbb{C}}}
\newcommand{\OB}{\mathcal{OB}}
\newcommand{\rank}{\operatorname{rk}}
\newcommand{\bfz}{{\mathbb{Z}}}
\def\v{\vskip.12in}
%%%%%%%%%%%%%%%%%%%%%%%%%%%%%%%%%%%%%%%%%%%%%%%%%%%%%%%%%%%%%%%%%%%%%
\begin{document}
\title[]{\ \ \ \  Milnor fillable contact structures are universally tight}

\author[Lekili and Ozbagci]{Yank\i\ Lekili and Burak Ozbagci}

%\subjclass[2000]{57R17, 57R65}

\keywords{contact structure, universally tight, taut foliation, surface singularity}

\begin{abstract} 

We show that the canonical contact structure on the link of a normal complex
singularity is universally tight. As a corollary we show the existence of closed,
oriented, atoroidal $3$-manifolds with infinite fundamental groups which carry
universally tight contact structures that are not deformations of taut (or Reebless)
foliations.  This answers two questions of Etnyre in \cite{etn}. 

\end{abstract}
\maketitle
\section{Introduction}

Let $(X,x)$ be a normal complex surface singularity. Fix a local embedding of $(X,x)$
in $(\bfc^N, 0)$. Then a small sphere $S^{2N-1}_{\epsilon} \subset \bfc^N$ centered at
the origin intersects $X$ transversely, and the complex hyperplane distribution
$\xi_{can}$ on $M=X\cap S^{2N-1}_{\epsilon}$ induced by the complex structure on $X$
is called the \emph{canonical}  contact structure.  For sufficiently small radius
$\epsilon$, the contact manifold  is independent of $\epsilon$ and the embedding, up
to isomorphism. The $3$-manifold $M$ is called the link of the singularity,  and $(M,
\xi_{can})$ is called the \emph{contact boundary} of $(X,x)$. 

A contact manifold $(Y, \xi)$ is said to be \emph{Milnor fillable} if it is isomorphic
to the contact boundary $(M, \xi_{can})$ of some isolated complex  surface singularity
$(X, x)$.  In addition, we say that a closed and oriented $3$-manifold $Y$ is Milnor
fillable if it carries a contact structure $\xi$ so that $(Y,\xi)$ is Milnor fillable.
It is known that  a closed and oriented $3$-manifold is Milnor fillable if and only if
it can be obtained by plumbing according to a weighted graph with negative definite
intersection matrix (cf. \cite{mum} and \cite{gr}).  Moreover any $3$-manifold has at
most one Milnor fillable contact structure up to \textit{isomorphism} (cf.
\cite{cnp}). Note that Milnor fillable contact structures are Stein fillable  (see
\cite{bd}) and hence tight \cite{eg}.  Here we prove that every Milnor fillable
contact structure is in fact universally tight, i.e., the pullback to the universal
cover is tight.  We would like to point out that universal tightness of a contact
structure is not implied by any other type of fillability.

In \cite{etn}, Etnyre settled  a question of Eliashberg and Thurston \cite{elit} by
proving that every contact structure on a closed oriented $3$-manifold is obtained by
a deformation of a foliation and raised two other related questions: 

\v

(Question $4$ in \cite{etn})  \emph{Is every
universally tight contact structure on a closed $3$-manifold with infinite
fundamental group the deformation of a Reebless foliation?}  

\v

(Question $5$ in \cite{etn})  \emph{Is every
universally tight contact structure on an atoroidal closed $3$-manifold with infinite
fundamental group the deformation of a taut foliation?} 

\v
In this note we answer both questions negatively as a consequence of our main result, although one does not necessarily need our main result to find counterexamples. As a matter of fact, one can drive the same consequence by  the existence of  (small) Seifert fibered $L$-spaces carrying transverse contact structures  which are known to be universally tight (see Remark~\ref{ibne}).

The assumption on the fundamental group is necessary since every foliation on a closed $3$-manifold with finite fundamental group has a Reeb component (and hence is not taut) by a theorem of Novikov.  Moreover Ghiggini \cite{gh} gave examples of toroidal $3$-manifolds which carry universally tight contact structures that are not weakly fillable (and therefore can not be perturbations of taut foliations by \cite{elit}). 

%\begin{remark}

We contrast our result with the result of Honda, Kazez and Mati\'{c}  in \cite{HKM}, where
they show that for a sutured manifold  with annular sutures, the existence
of a (universally) tight contact structure is equivalent to the existence of a taut
foliation.

%\end{remark}

We assume that all the $3$-manifolds are compact and oriented,  all the contact
structures are co-oriented and positive and all the surface singularities are isolated
and normal. 

\section{Milnor fillable implies universally tight}

A {\em graph manifold} is a 3-manifold $M(\Gamma)$ obtained by plumbing circle bundles
according to a connected weighted plumbing graph $\Gamma$. More precisely, let $A_1,
\ldots, A_r$ denote vertices of a connected graph $\Gamma$. Each vertex is decorated
with a pair $(g_i, e_i)$ of integral weights, where $g_i \geq 0$.  Here the $i$th
vertex represents an oriented circle bundle of Euler number $e_i$ over a closed
Riemann surface of  genus $g_i$. Then $M(\Gamma)$ is the $3$-manifold obtained by
plumbing these circle bundles according to $\Gamma$. This means that if there is an edge 
connecting  two vertices in $\Gamma$,  then one glues the circle bundles corresponding to these vertices as follows.  
First one removes a neighborhood of a circle fibre on each circle bundle which is given by the preimage of a disk on the base. 
The resulting  boundary torus on each circle bundle can be identified with  $S^1 \times S^1$ using the natural trivialization of the circle fibration
over the disk that is removed. Now one glues these bundles together using the diffeomorphism that exchanges the two circle
factors on the boundary tori.

A \emph{horizontal} open book in
$M(\Gamma)$ is an open book whose binding consists of some fibers in the circle
bundles and whose (open) pages are transverse to the fibers.  We also  require that the
orientation induced on the binding by the pages coincides with the orientation of the
fibers induced by the fibration. 

In this paper, we will consider horizontal open books on graph manifolds coming from
isolated normal complex singularities.  Given an analytic function $f\colon (X,x)
\rightarrow (\bfc , 0)$ vanishing at $x$, with an isolated singularity at $x$, the
open book decomposition $\OB_f$ of the boundary $M$ of $(X,x)$ with binding $L=M \cap
f^{-1} (0)$ and projection $ \pi=\frac{f}{|f|}\colon M \setminus L \to S^1 \subset
\bfc$ is called the \emph{Milnor open book} induced by $f$.

\begin{theorem} \label{unit} A Milnor fillable contact structure is universally tight. 
 
\end{theorem}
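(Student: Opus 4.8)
\emph{Proof strategy.} The plan is to reduce everything to the canonical contact structure and its supporting open book, and then to analyze the universal cover directly. By the uniqueness result of \cite{cnp} it suffices to prove the statement for the canonical contact structure $\xi_{can}$ on the link $M=M(\Gamma)$ of a normal surface singularity $(X,x)$. The first step is to record that $\xi_{can}$ is supported by a Milnor open book $\OB_f$, and that this open book is \emph{horizontal} with respect to the graph-manifold structure: its binding is a union of fibers and its pages are transverse to the circle fibers of the Seifert pieces of $M(\Gamma)$. Granting this, I would split into two cases according to $\pi_1(M)$. If $\pi_1(M)$ is finite, then $M$ is covered by a finite cover, and Milnor (indeed Stein) fillability pulls back along this finite cover: the corresponding finite cover of the singularity produces a filling of the pulled-back structure, which is therefore tight by \cite{eg}. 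Thus the entire difficulty is concentrated in the case of infinite fundamental group, where the universal cover is non-compact and fillability does not descend to it.

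So assume $\pi_1(M)$ is infinite. Since $M$ is an irreducible graph manifold coming from a negative definite plumbing, it is aspherical, and its universal cover $\widetilde{M}$ is diffeomorphic to $\bR^3$. I would now exploit the horizontal open book. On each Seifert piece the contact form can be isotoped to be transverse to the circle fibers away from a neighborhood of the binding, so that $\xi_{can}$ is $C^0$-close to the foliation of $M$ by (spun-up) pages. The essential point is that this page foliation is \emph{not} taut on $M$ itself --- near the binding the pages spiral to form Reeb-type pieces, which is exactly why $\xi_{can}$ is not a deformation of a taut foliation downstairs and why Etnyre's questions have a negative answer. Passing to the universal cover, however, the infinite cyclic covering of the binding complement unwinds this spiraling: the lifted fibration has $\bR$-fibers, the lifted pages become non-compact leaves, and the Reeb components disappear. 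The plan is to show that the resulting foliation on $\widetilde{M}\cong\bR^3$ is Reebless (in fact taut), so that the pulled-back contact structure $\widetilde{\xi}_{can}$, being a $C^0$-deformation of it, is tight by the perturbation techniques of Eliashberg and Thurston \cite{elit}.

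To make the tightness conclusion rigorous on the non-compact cover I would argue by contradiction through the compactness of a hypothetical overtwisted disk. Such a disk is a compact subset of $\bR^3$, hence contained in a finite union of lifts of the Seifert pieces; on this compact region $\widetilde{\xi}_{can}$ is a controlled deformation of the Reebless (taut) foliation constructed above, and an overtwisted disk is incompatible with transversality to such a foliation. This localizes the whole problem to a region covered by finitely many lifts, where the standard confoliation estimates apply.

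The main obstacle, as I see it, is twofold and lives entirely in the infinite-$\pi_1$ case. First, the local model near the binding must be handled with care: one has to check that the Reeb-type behavior of the page foliation genuinely unwinds into Reebless behavior in the infinite cyclic cover, and that the transverse contact structure deforms correctly across this neighborhood. Second, since $M(\Gamma)$ is in general a union of Seifert pieces glued along the JSJ tori, I must control the monodromy of the horizontal open book across those tori and verify that the transverse/foliated structures on the individual pieces glue to a globally Reebless foliation on $\widetilde{M}$. I expect that matching orientations (the compatibility of the binding orientation with the fiber orientation, built into the definition of a horizontal open book) is precisely what guarantees that no Reeb component survives after gluing in the cover, and that this is the technical heart of the argument.
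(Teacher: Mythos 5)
There is a genuine gap, and it sits exactly where you have placed all the weight: the infinite--$\pi_1$ case. The tightness conclusion of Eliashberg--Thurston for contact structures obtained by perturbing taut (or Reebless) foliations is a closed-manifold theorem: it proceeds by producing a weak symplectic filling of the perturbed structure and then invoking Gromov--Eliashberg, and neither step makes sense on the non-compact cover $\widetilde{M}\cong\bR^3$ or on a compact region with boundary inside it. Your proposed fix --- localizing a hypothetical overtwisted disk to a finite union of lifted Seifert pieces and saying that an overtwisted disk is ``incompatible with transversality'' to a Reebless foliation there --- is not a theorem; tightness is not detected by such a local criterion, and being $C^0$-close to a Reebless foliation on a compact manifold with boundary does not preclude overtwisted disks. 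On top of this, the Reebless foliation on $\widetilde{M}$ that the whole argument rests on is never constructed: the page foliation of the open book downstairs has Reeb-type pieces at the binding, the binding components may or may not unwind in the universal cover depending on their order in $\pi_1$, and controlling the lifted foliation across the lifts of the JSJ tori is precisely the part you defer as ``the technical heart.'' A secondary issue: in the finite-$\pi_1$ case you assert that Stein fillability pulls back along finite covers, which is false in general (Gompf gave finite covers of Stein fillable contact structures that are overtwisted); the case is salvageable only because the link of a quotient singularity is $S^3/G$ and $\xi_{can}$ lifts to the standard tight structure on $S^3$, which is the argument you should make instead.

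The paper takes a different and more robust route that you may want to compare with. It never lifts a foliation to the universal cover of $M$. Instead it uses the fact that the Milnor open book is horizontal on each Seifert piece $V_i$ of the JSJ decomposition to build a contact structure on $V_i$ transverse to the Seifert fibration; such transverse contact structures are universally tight by an argument of Massot (lift to the universal cover of the base, unwrap the fiber, and recognize the standard structure on $\bR^3$ or $S^3$). It then arranges the boundary behavior so that the pieces glue to a contact structure still supported by the Milnor open book --- hence isotopic to $\xi_{can}$ by Giroux uniqueness --- and concludes by Colin's gluing theorem, which states that universally tight contact structures glued along incompressible pre-Lagrangian tori remain universally tight. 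The external inputs (transversality to Seifert fibrations, Giroux uniqueness of supported contact structures, Colin's gluing theorem) are exactly the ones missing from your outline, and they are what allow one to avoid any non-compact version of the Eliashberg--Thurston perturbation theory.
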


\begin{proof}  Given a Milnor fillable contact $3$-manifold $(Y, \xi)$. By definition
$(Y, \xi)$  is isomorphic to the link $(M, \xi_{can})$ of some surface singularity.
Hence it suffices to show that $(M, \xi_{can})$ is universally tight. It is known that
$M$ is an irreducible graph manifold $M(\Gamma)$ where $\Gamma$ is a negative definite
plumbing graph \cite{neumann}.  Moreover, such a manifold is characterized by the
property that there exists a unique minimal set  $\mathcal{T} $ (possibly empty)  consisting of pairwise disjoint \emph{incompressible} tori in $M$ 
such that each component of $M - \mathcal{T}$ is an orientable Seifert fibered manifold
with an orientable base \cite{neumann}. In terms of the plumbing description
$\mathcal{T}$ is a subset of the tori that are used to glue the circle bundles in the
definition of $M(\Gamma)$. The set $\mathcal{T}$ is minimal if in plumbing of two circle bundles the homotopy class of circle fiber in one boundary torus is not identified with the homotopy class of the fiber in the other boundary torus. 

Recall that an arbitrary Milnor open book $\OB$ on $M$ has the following essential features \cite{cnp}: It is compatible with 
the canonical contact structure $\xi_{can}$, horizontal when restricted to each
Seifert fibered piece in $M - \mathcal{T}$ which means that the Seifert fibres
intersect the pages of the open book transversely, and the binding of the open book consists of some number (which we can take to be non-zero) of regular fibres of the Seifert fibration in each Seifert fibred piece.

In the rest of the proof, we will construct a universally tight contact structure $\xi$ on $M$ which is compatible with the Milnor open book $\OB$.  This implies that 
the canonical contact structure $\xi_{can}$ is isotopic to $\xi$  (since they are both compatible with $\OB$)  and thus we conclude that $\xi_{can}$ on the singularity link $M$ is universally tight. 

Let $V_i$ denote a Seifert fibered $3$-manifold with boundary, which is a component of   $M - N(\mathcal{T})$, where  $N(\mathcal{T})$ denotes a regular neighborhood of $\mathcal{T}$.  Consider the $3$-manifold  $V_i^\prime$ obtained by removing a regular neighborhood of the binding of $\OB$ from $V_i$.  Note that  $V_i^\prime$ is also a Seifert fibered manifold since the binding consists of regular fibers of the Seifert fibration on $V_i$. Then the restriction of a page of $\OB$  to $V_i^\prime$ is a connected horizontal surface  (see the proof of Proposition $4.6$ in \cite{cnp}) which we denote by $\Sigma_i^\prime$.  It follows that $V_i^\prime$ is a surface bundle over $S^1$ whose fibers are precisely the restriction of the pages of $\OB$ to $V_i^\prime$, since $\Sigma_i^\prime$ does not separate $V_i^\prime$. Note that $\Sigma_i^\prime$ is a  branched cover 
of the base of the Seifert fibration  on $V_i^\prime$ and the monodromy $\phi_i$ of this surface bundle 
is a periodic self-diffeomorphism of $\Sigma_i^\prime$ of some order $n_i$ (cf. Section $1.2$ in \cite{H}). 

Now we construct, as in Section $2$ in \cite{gh},  a contact structure $\xi_i^\prime$
on $V_i^\prime$  which is ``compatible'' with the surface fibration $V_i^\prime \to
S^1$.  Here compatibility means that the Reeb vector field of the contact form  is
transverse to the fibers, keeping in mind that a fiber of this fibration is cut out
from a page of the open book $\OB$. Let $\beta_i$ denote a $1$-form on
$\Sigma_i^\prime$ such that  $d\beta_i$ is a volume form on $\Sigma_i^\prime$ and
$\beta_i|_{\partial \Sigma_i^\prime}$ is a volume form on $\partial \Sigma_i^\prime$.
Then the $1$-form $$ \beta_i^\prime = \frac{1}{n_i} \displaystyle\sum_{k=0}^{n_i -1}
(\phi_i^k)^* \beta_i ,$$ which also satisfies the above conditions, is a  $\phi_i$
invariant $1$-form on $\Sigma_i^\prime$. Let $t$ denote the coordinate on $S^1$.  It
follows that  for every real number $\epsilon > 0$, the kernel of the $1$-form $dt +
\epsilon \beta_i^\prime$ is a contact structure  on $V_i^\prime$ which is compatible
with the fibers. Note that the characteristic foliation on every torus in $\partial
V_i^\prime$ is linear  with a slope arbitrarily close to the slope of the foliation
induced by the pages when $\epsilon \to 0$.  Here we point out that, for fixed
$\epsilon > 0$, different choices of $\beta_i$ give isotopic contact structures by
Gray's theorem,  while the choice of $\epsilon$ will not play any role in our
construction as long as it is sufficiently small. Therefore, we will fix a sufficiently
small $\epsilon$ and denote the isotopy type of this contact structure by
$\xi_i^\prime$.  Moreover the Reeb vector field $R_i$ is tangent to the circle fibers
in the Seifert fibration and hence transverse to the fibers of the surface bundle
$V_i^\prime \to S^1$.   

Furthermore, we observe that $\xi_i^\prime$ is transverse to the Seifert fibration on
$V_i^\prime$ and can be extended over to $V_i$ along the neighborhood of the binding
so that it remains transverse to the Seifert fibration. Now we claim that the resulting
contact structure $\xi_i$ on $V_i$ is universally tight. This essentially follows from
an argument in Proposition $4.4$ in \cite{mas} where the universal tightness of 
transverse contact structures on closed Seifert fibered 3-manifolds is proven (see
also Corollary 2.2 in \cite{lm}). The
difference in our case is that $V_i$ may have toroidal boundary. Nevertheless, the argument
in \cite{mas} still applies. Namely, any contact structure which is transverse to the
fibers of a Seifert manifold (possibly with boundary or non-compact) is
universally tight. Consider first the universal cover of the base of the Seifert
fibration. This can be either $S^2$ or $\mathbb{R}^2$. If it is $S^2$, then the $V_i$
cannot have any boundary, as we arranged that if there is a boundary to $V_i$, it
should be incompressible. Therefore, in that case $\mathcal{T}=\emptyset$ and $M$ is
closed Seifert fibred space with base $S^2$ with a contact structure transverse to the
fibres of the Seifert fibration. The universal cover of $M$ is now obtained by
unwrapping the fibre direction. Hence it is either $S^3$ or $S^2\times \mathbb{R}$
depending on whether $\pi_1(M)$ is finite or infinite. However, it cannot be
$S^2\times \mathbb{R}$ as $M$ is irreducible. In particular, when
$\mathcal{T}=\emptyset$, it
follows that $M$ is either a small Seifert fibered or a lens space and its universal cover is $S^3$. The
contact structure and the Seifert fibration lifts to a transverse contact structure on
$S^3$. It follows that this is the standard tight contact structure on $S^3$ (for
example, see \cite{mas}). Next, suppose that the base of the Seifert fibration on
$V_i$ has universal cover homeomorphic to $\mathbb{R}^2$. We then lift the Seifert
fibration and the contact structure to get a contact structure on $\mathbb{R}^2 \times
S^1$, such that the contact structure is transverse to the $S^1$ factor. Next, we
unwrap the $S^1$ direction to get a contact structure on $\mathbb{R}^2 \times \mathbb{R}$ such
that the contact structure is transverse to the $\mathbb{R}$ factor and invariant under
integral translations in this direction. It follows that this latter contact structure
is the standard tight contact structure on $\mathbb{R}^3$ (see \cite{gir} Section
2.B.c).

Let $V_1, \ldots, V_n$ denote the Seifert fibered manifolds in the decomposition of $M
- N(\mathcal{T})$. Our goal is to glue together  $\xi_i$'s on $V_i$'s to get a
universally tight contact structure  $\xi$ on $M$ which is \emph{compatible} with
$\OB$.  We should point out that if one ignores the compatibility with $\OB$, then
$\xi_i$'s can be glued along the incompressible pre-Lagrangian tori on $\partial
V_i$'s to yield a universally tight contact structure on $M$, by Colin's gluing
theorem \cite{col}.  This was already described in Theorem $1.4$ in \cite{col2},
although the contact structures on Seifert fibered pieces were obtained by perturbing Gabai's taut foliations \cite{g}. 

By construction, the contact structure $\xi_i$ on $V_i$ is compatible with the restriction of $\OB$ to $V_i$.
We first modify  $\xi_i$ near each component of $\partial V_i$ to put it  in a certain
standard form.  To this end, let $N(T_{ij})$ denote the normal neighborhood of a torus
$T_{ij} \in \mathcal{T}$ along which plumbing is performed between $V_i$ and $V_j$. 

Recall that the plumbing was perfomed by trivializing the boundary of the circle
bundles hence identifying them with $T^2=S^1 \times S^1$ and then exchanging the two circle factors.
We can extend these trivialization in a neighborhood of $T_{ij}$, by picking sections
$s_i$ near $T_{ij}$ which extends the section used for the plumbing. Let $r_i$ denote
the fibre direction of the Seifert fibration on $V_i$. Then, we can identify the
boundary of $N(T_{ij})$ in $V_i$ with $T^2$ so that the basis $(r_i,s_i)$ is sent to
the standard basis $\{ \partial_x , \partial_y \} $ of $T^2$. Hence, we can identify $N(T_{ij}) = T^2 \times
[a_i,b_i] \cup_{\rho_{ij}} - T^2\times [a_j,b_j]$ where $\rho_{ij}: T^2 \times \{b_i\}
\to - T^2 \times \{ b_j \} $ is the gluing map used in plumbing sending $(r_i,s_i) \to (s_j,r_j)$.

Let $\mathcal{F}_i$ denote the foliation by circles with a certain rational slope
$m_i/m_j$ on $T^2\times \{a_i\}$ induced by the pages of $\OB$. This means that the
page intersects $T^2 \times \{a_i \}$ at a linear curve tangent to $m_j r_i + m_i s_i$
, we also scale $m_i$ and $m_j$ so that we have $\beta'_i(m_j r_i + m_i s_i)=1$ (The
latter can be arranged as by construction $\beta'_i$ restricts to a volume form on the
boundary of the pages of the open book when restricted to $V_i$). The pages extend
into $T^2 \times [a_i,b_i]$ linearly, as they intersect each $T^2 \times \{c\}$
transversely with slope $m_i/m_j$, thus we obtain the foliation $\mathcal{F}_i \times
[a_i,b_i]$. Similarly, $\mathcal{F}_j$ denote the foliation by circles given by the
intersection of the pages of $\OB$ with $T^2 \times \{a_j\}$ which necessarily has
rational slope $m_j/m_i$ so that the gluing map $\rho_{ij}$ glues the pages in each
piece together to form $\OB$. 

For later convenience, in our identification $N(T_{ij}) = T^2 \times [a_i,b_i]
\cup_{\rho_{ij}} - T^2\times [a_j,b_j]$, we will choose $-\frac{\pi}{2} < a_i < b_i <
\frac{\pi}{2}$ so that $- \cot a_i = m_i/ (m_j - \epsilon) $  is the slope of the
characteristic foliation of the contact structure $\xi_i$ on $T^2\times \{a_i\}$ and
$b_i$ so that $-\cot b_i = m_i /m_j$ is the slope of the pages of $\OB$. By our
construction, the characteristic foliation is the integral of the vector field
$-\epsilon r_i + (m_jr_i+m_i s_i)$ and we can choose $\epsilon$ as small as we need,
so that the slope of the characteristic foliation is arbitrarily close to the slope of
the pages. In particular, we can arrange that $b_i \in (a_i, a_i + \frac{\pi}{2})$.

We now need to glue together the contact forms that we constructed on $V_i$ by extending them to $N(T_{ij})$. For our purposes, we need to pay special attention to compatibility with  $\OB$ on $N(T_{ij})$. 

Consider the contact form $\alpha_i= \cos t dx + \sin t dy$ on  $T^2 \times [a_i,
b_i]$.   By \cite{ch} Lemma $9.1$ we can isotope $\xi_i$ on $V_i$ near the boundary so
that it is defined by a contact form that glue to $\alpha_i$ (note that the slopes of
the characteristic foliations on $T^2 \times \{ a_i \}$ induced by $\xi_i$ and
$\alpha_i$ agree).  Moreover, after this isotopy the Reeb vector field of $\xi_i$ still
remains transverse to the pages of $\OB$ on $V_i$.  Furthermore, the Reeb vector field
of $\alpha_i$, has slope $\tan a_i$ hence it is perpendicular to the slope $- \cot
a_i$ at $T^2 \times \{ a_i \}$ which we know to be arbitrarily close the slope of the
foliation $\mathcal{F}_i \times \{a_i\} $ induced by the page of $\OB$. Since the
slope of the Reeb vector field changes by strictly less than $\pi /2$ as we go from
$a_i$ to $b_i$, the Reeb vector field still remains transverse to $\mathcal{F}_i
\times [a_i,b_i]$.  Therefore, the form $\alpha_i$ is compatible with $\OB$ in $T^2
\times [a_i, b_i]$.  Finally, to finish the construction of the contact structure
$\xi$ on $M$, we observe that the gluing map $\rho_{ij}$ sends $\alpha_i$ to
$\alpha_j$, since we arranged that the slope of $\alpha_i$ and the slope of the
characteristic foliation induced by the page are the same at $T^2 \times \{b_i \}$. 

We constructed a contact structure $\xi$ which is compatible with a Milnor open book (hence is isomorphic to $\xi_{can}$) such that $\xi$ is isotopic to $\xi_i$ on $V_i$, a universally tight contact structure, furthermore for each incompressible torus $T \in \mathcal{T}$, the characteristic foliation of $\xi$ is a linear foliation (with slope $m_i / m_j$). Therefore, we are in a position to apply the gluing result of Colin \cite{col}  which states that universally tight contact structures can be glued along pre-Lagrangian tori to a universally tight contact structure. This shows that $\xi_{can}$ is a universally tight contact structure. \end{proof}

\begin{remark} The above construction shows that when the fibres of each Seifert
fibered piece is not contractible, then  $\xi_{can}$ is {\em hypertight}, that is, it
can be defined by a contact form whose associated Reeb vector field has no
contractible orbits. Thus, for example when $\mathcal{T} \neq \emptyset$, $\xi_{can}$
is hypertight. Note that hypertight contact structures are tight \cite{hof} and any finite cover of 
a hypertight contact manifold is hypertight \cite{gh}. 
These results together with the fact that graph manifolds have residually finite
fundamental groups give another proof of universally tightness (avoiding Colin's
gluing result). Since $M$ is irreducible, its universal cover is diffeomorphic to
either $S^3$ or $\mathbb{R}^3$ depending on whether $\pi_1(M)$ is finite or infinite.
The universal cover is $S^3$ if and only if $M$ is atoroidal, then $M$ is either a small
Seifert fibered space or a lens space and these have no hypertight contact structures.
Therefore, $M$ is hypertight if and only if $\pi_1(M)$ is infinite (or equivalently
its universal cover is $\mathbb{R}^3$).

\end{remark}

\begin{remark}

It is known that any finite cover of a singularity link is a singularity link.
Therefore, another approach to prove Theorem \ref{unit} would be to show that a finite
cover of a Milnor fillable contact structure is Milnor fillable. It is not clear to
the authors of this paper whether this is indeed true. Note that there exist finite
covers of Stein fillable contact structures which are not tight (in particular, not Stein
fillable) \cite{gompf}.  

\end{remark}

\begin{remark} Since any Milnor fillable contact $3$-manifold $(Y,\xi)$ is Stein
fillable (see \cite{bd}) , it follows from Theorem 1.5 in \cite{OS} that the
contact invariant $c(\xi) \in \widehat{HF}(-Y)/(\pm 1)$ is non-trivial. Therefore, by
\cite{GHV}, the Giroux torsion of $Y$ is zero. In particular, the incompressible
tori in $\mathcal{T}$ have zero torsion. This was predicted in \cite{nem} and was
raised as a question there.
\end{remark}

\section{Universally tight but no taut}

A rational homology sphere is called an $L$-space if $\rank \widehat{HF}(Y) = |H_1(Y; \bfz)|$. Lens spaces are basic examples of $L$-spaces which explains the name. A characterization of $L$-spaces among Seifert fibered $3$-manifolds is given by

\begin{theorem}\label{hom} \cite{ls} A rational homology sphere which is Seifert fibered over $S^2$  is an $L$-space if and only if  it does not carry a taut foliation. 
 \end{theorem}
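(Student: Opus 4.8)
The plan is to prove the equivalence by treating its two implications separately, handling the obstruction direction by a general Heegaard Floer argument and the existence direction by an explicit construction of horizontal foliations, with the real content being a combinatorial reconciliation of two conditions on the Seifert data.

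First I would dispose of the direction that a taut foliation prevents $M$ from being an $L$-space; here the Seifert hypothesis is irrelevant and I would argue for an arbitrary rational homology sphere $Y$. Given a coorientable taut foliation on $Y$, the theorem of Eliashberg and Thurston \cite{elit} perturbs it to a weakly symplectically fillable contact structure, and the resulting semi-filling, processed through the Heegaard Floer TQFT, forces the reduced group $\widehat{HF}_{red}(Y)$ to be nontrivial. Hence $\rank \widehat{HF}(Y) > |H_1(Y;\bfz)|$ and $Y$ is not an $L$-space. This is precisely the theorem of Ozsv\'ath--Szab\'o that $L$-spaces carry no coorientable taut foliations, and I would invoke it as a black box. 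Contrapositively, if the Seifert space $M$ is an $L$-space then it admits no taut foliation.

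For the converse I would avoid building general taut foliations and construct only \emph{horizontal} ones, which are automatically taut. Writing $M$ as a Seifert fibration over $S^2$ with normalized invariants $(e_0;\beta_1/\alpha_1,\dots,\beta_k/\alpha_k)$ and nonzero Euler number (the rational homology sphere condition), I would appeal to the solution of the Eisenbud--Hirsch--Neumann realizability problem by Jankins--Neumann and Naimi, which characterizes the existence of a horizontal foliation on $M$ by an explicit arithmetic inequality in the $\alpha_i$, $\beta_i$, and $e_0$. In parallel I would use Ozsv\'ath--Szab\'o's computation of $\widehat{HF}$ for negative definite plumbed manifolds, which (after fixing the orientation for which the plumbing is negative definite) expresses the $L$-space condition as a second arithmetic inequality in the same data. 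The theorem then reduces to the key lemma that $M$ fails to be an $L$-space if and only if the horizontal-foliation inequality holds.

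The hard part will be exactly this arithmetic matching, and I expect it to be the main obstacle. The foliation inequality is a realizability condition for a prescribed total holonomy obtained by gluing foliated solid tori, naturally phrased through ceiling functions of the fractions $\beta_i/\alpha_i$; the Floer-theoretic $L$-space condition is read off from the graded roots, or equivalently the lattice cohomology, of the plumbing, and is encoded by the same fractions together with $e_0$. Showing that these two descriptions cut out the same locus in the space of Seifert data, while keeping careful track of orientation conventions so that the fillable objects occur for the correct orientation, is where the genuine work lies. Once the matching lemma is established the logic closes into a cycle: not an $L$-space $\Rightarrow$ a horizontal foliation exists $\Rightarrow$ a taut foliation exists $\Rightarrow$ not an $L$-space, so all three properties are equivalent and $M$ is an $L$-space exactly when it carries no taut foliation.
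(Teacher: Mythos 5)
First, a point of comparison: the paper does not prove this statement at all --- it is quoted from Lisca and Stipsicz \cite{ls} --- so there is no internal proof to measure your attempt against. Judged against the actual proof in \cite{ls} and its inputs, your outline identifies the correct strategy: the obstruction direction via Eliashberg--Thurston \cite{elit} together with the Ozsv\'ath--Szab\'o theorem that a coorientable taut foliation forces nontrivial reduced Heegaard Floer homology, and the existence direction via the Eisenbud--Hirsch--Neumann/Jankins--Neumann/Naimi realizability criterion for horizontal foliations, matched against the arithmetic characterization of Seifert fibered $L$-spaces coming from the plumbing computations.

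As a proof, however, the proposal has a genuine gap, and it sits exactly where you say the ``genuine work lies'': the matching of the two arithmetic conditions is announced as a key lemma but never carried out, and that lemma \emph{is} the theorem --- everything else in the outline is the invocation of known black boxes. Establishing it requires writing both inequalities explicitly in the normalized Seifert invariants and a nontrivial case analysis, including careful bookkeeping of orientations (the $L$-space condition is invariant under orientation reversal while the sign of $e_0$ and the fillability statements are not); this occupies the bulk of \cite{ls} and the earlier papers in that series. A second, smaller gap: in the obstruction direction you restrict to \emph{coorientable} taut foliations and assert that the Seifert hypothesis is irrelevant, but the theorem as stated allows arbitrary taut foliations. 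To exclude non-coorientable ones on an $L$-space you do need the Seifert structure, via Brittenham's theorem that an essential lamination in a Seifert fibered space can be isotoped to be horizontal or vertical (a horizontal foliation is automatically transversely oriented by the fiber direction, and vertical ones are ruled out over $S^2$ for a rational homology sphere); the alternative of passing to the orientation double cover does not obviously preserve the $L$-space condition. Neither issue invalidates the approach, but both must be filled in before this is a proof rather than a plan.
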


A huge class of examples of $L$-spaces come from complex surface singularities. Recall
that an isolated normal surface singularity $(X,x)$ is rational (cf. \cite{a}) if the geometric
genus $p_g := \text{dim}_\mathbb{C} H^1(\tilde{X} , \mathcal{O}_{\tilde{X}})$ is
equal to zero, where $\tilde{X} \to X$ is a resolution of the singular point $x\in X$. This definition does not depend on the resolution.

\begin{theorem}\label{rat} \cite{nem} The link of a rational surface singularity is an $L$-space. 

\end{theorem}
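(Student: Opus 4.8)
The plan is to reduce the statement to a combinatorial computation of Heegaard Floer homology from the resolution graph, and then to feed Artin's characterization of rationality into that computation.

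First I would record that for a rational singularity the minimal resolution graph $\Gamma$ is a tree all of whose vertices carry genus $0$; this is classical (Artin). Consequently the link $M = M(\Gamma)$ is a negative definite plumbed \emph{rational homology sphere}, with $H_1(M;\bfz) \cong L'/L$, where $L = H_2(\tilde X;\bfz)$ carries the negative definite intersection form and $L'$ is its dual lattice. In particular $|H_1(M;\bfz)|$ is the absolute value of $\det(\text{intersection matrix})$, so the $L$-space condition is meaningful; since $\rank\widehat{HF}(M) = \rank\widehat{HF}(-M)$, it is equivalent to $\widehat{HF}(-M)$ having rank $|H_1(M;\bfz)|$, i.e.\ to the vanishing of the reduced group $HF^+_{red}(-M) = 0$.

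Next I would invoke the combinatorial description of $HF^+$ of negative definite plumbed manifolds. Following N\'emethi \cite{nem} (building on work of Ozsv\'ath and Szab\'o), for each $\mathrm{Spin}^c$ structure $\sigma$, represented by a characteristic vector $k \in L'$, there is a graded isomorphism $HF^+(-M,\sigma) \cong \mathbb{H}^*(\Gamma, k)$, where $\mathbb{H}^*$ is the lattice cohomology built from the Riemann--Roch weight function $\chi_k(x) = -\tfrac{1}{2}\bigl(k(x) + (x,x)\bigr)$ on $L$. Under this identification the reduced part $HF^+_{red}(-M,\sigma)$ corresponds to the positive-degree lattice cohomology $\mathbb{H}^{>0}(\Gamma,k)$ together with the excess of $\mathbb{H}^0(\Gamma,k)$ beyond a single tower $\mathcal{T}^+$. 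Thus it suffices to show that for a rational graph, and for \emph{every} characteristic $k$, one has $\mathbb{H}^{>0}(\Gamma,k)=0$ and $\mathbb{H}^0(\Gamma,k)=\mathcal{T}^+$.

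The combinatorial heart of the argument is Artin's criterion: $\Gamma$ is rational if and only if every effective nonzero cycle $Z>0$ supported on the exceptional divisor satisfies $\chi(Z) \geq 1$ (equivalently $p_a(Z)\le 0$). I would use this to control the topology of the sublevel sets $\{x : \chi_k(x) \le n\}$ of the weight function on the cubical lattice: Artin's inequality, applied along Laufer-type computation sequences, forces each sublevel set to be connected (and simply connected in the cubical sense), so that all higher lattice cohomology vanishes and $\mathbb{H}^0$ is a single tower. Running this for the canonical vector matches the equality $p_g = 0$, and the same monotonicity argument, being insensitive to the choice of characteristic representative, applies uniformly to every $\mathrm{Spin}^c$ structure. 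Combined with the isomorphism above this yields $HF^+_{red}(-M)=0$, i.e.\ $M$ is an $L$-space.

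The main obstacle is precisely this last step: translating the arithmetic positivity of $\chi$ into the vanishing of the higher lattice cohomology and the absence of extra local minima of $\chi_k$. Controlling the connectivity of all sublevel sets simultaneously, for every characteristic class rather than just the canonical one, is the delicate point, and it is where the full strength of Artin's computation-sequence argument is needed, well beyond the bare equality $p_g=0$.
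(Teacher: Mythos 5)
The paper does not actually prove this statement: Theorem~\ref{rat} is quoted verbatim from N\'emethi \cite{nem} with no argument supplied, so there is no in-paper proof to compare yours against. What you have written is, in outline, a pr\'ecis of the proof in the cited reference, and it is correct as a sketch: reduce the $L$-space condition to $HF^{+}_{red}(-M)=0$, use the combinatorial model of $HF^{+}$ for negative definite plumbed rational homology spheres, and feed in Artin's criterion $\chi(Z)\ge 1$ for all effective cycles $Z>0$. Two remarks on fidelity and completeness. First, you phrase the combinatorial model as the full lattice-cohomology isomorphism $HF^{+}(-M,\sigma)\cong \mathbb{H}^{*}(\Gamma,k)$ for arbitrary negative definite graphs; at the level of \cite{nem} this identification is only established for \emph{almost rational} graphs (via the $\tau$-function/graded-root formalism rather than lattice cohomology), and the general case was at that time a conjecture. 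Your argument survives because rational graphs are almost rational, but you should invoke the restricted statement rather than the general one to avoid resting on an unproved (at the time) isomorphism. Second, the step you correctly single out as the heart of the matter --- that Artin's inequality for the canonical cycle forces the relevant monotonicity/connectivity for \emph{every} $\mathrm{Spin}^{c}$ structure, not just the canonical one --- is asserted rather than carried out; this is precisely the content of the generalized Laufer computation-sequence argument in \cite{nem}, and as written your proposal defers to it rather than proving it. So the proposal is a faithful and essentially correct roadmap to the known proof, but it is a roadmap: the one genuinely nontrivial combinatorial lemma is named, not established.
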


\begin{corollary}\label{main} 
 
If $Y$ is the link of a rational surface singularity which is Seifert fibered over $S^2$, then $Y$ carries a universally tight contact structure that can not be obtained by a deformation of a taut foliation. 

\end{corollary}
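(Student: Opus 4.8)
The plan is to chain together the three results already in hand---Theorem~\ref{rat}, Theorem~\ref{hom}, and Theorem~\ref{unit}---and then add the elementary observation that a contact structure arising as a deformation of a taut foliation forces its ambient manifold to carry a taut foliation. First I would note that, since $Y$ is the link of a rational surface singularity, Theorem~\ref{rat} shows that $Y$ is an $L$-space; in particular $Y$ is a rational homology sphere. Together with the standing hypothesis that $Y$ is Seifert fibered over $S^2$, this places us precisely in the situation governed by Theorem~\ref{hom}. Reading that theorem in its contrapositive form, the fact that $Y$ is an $L$-space yields that $Y$ admits \emph{no} taut foliation.

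Next I would produce the desired contact structure. Because $Y$ is the link of a surface singularity, its canonical contact structure $\xi_{can}$ is Milnor fillable by definition, and hence universally tight by Theorem~\ref{unit}. It then remains only to supply the logical bridge: if $\xi_{can}$ were obtained as a deformation of a taut foliation $\mathcal{F}$, then $\mathcal{F}$ would itself be a taut foliation on $Y$, contradicting the conclusion of the previous step. Therefore $\xi_{can}$ is a universally tight contact structure on $Y$ that cannot be obtained by a deformation of a taut foliation, which is exactly the assertion of the corollary.

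I do not expect any genuine obstacle here, since all the depth of the statement is already packaged inside the three cited theorems, and the corollary follows by a direct combination of their hypotheses and conclusions. The one point that warrants a moment of care is verifying that $Y$ is a rational homology sphere before invoking Theorem~\ref{hom}; this is automatic once $Y$ is recognized as an $L$-space. Finally, to turn the corollary into an explicit negative answer to Etnyre's Question~5, one would further restrict to examples in which $Y$ is atoroidal and has infinite fundamental group---for instance a small Seifert fibered $L$-space over $S^2$ with three exceptional fibers---but this refinement lies beyond the statement of the corollary itself.
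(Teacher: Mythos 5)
Your proposal is correct and follows essentially the same route as the paper: cite Theorem~\ref{rat} to see $Y$ is an $L$-space, Theorem~\ref{hom} to rule out taut foliations, and Theorem~\ref{unit} for universal tightness of $\xi_{can}$. The extra remarks (the contrapositive reading, the logical bridge about deformations) merely make explicit what the paper leaves implicit.
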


\begin{proof}
The link of a rational surface singularity is an $L$-space by Theorem~\ref{rat} and hence it does not carry any taut foliations by Theorem~\ref{hom}. Moreover,  Theorem~\ref{unit} implies that the canonical contact structure on this link is universally tight. \end{proof}

\begin{remark} \label{ibne}

Note that Seifert fibered $3$-manifolds as above carry transverse contact structures (by Theorem 
$1.3$ in   \cite{lm}) and such contact structures are known to be universally tight (cf. Corollary $2.2$ in \cite{lm} and also Proposition $4.4$ in \cite{mas}).
\end{remark} 

\begin{corollary}\label{app} There exist infinitely many atoroidal $3$-manifolds with
infinite fundamental groups which carry universally tight contact structures that are
not deformations of taut (or Reebless) foliations.

\end{corollary}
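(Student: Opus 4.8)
The plan is to reduce everything to Corollary~\ref{main} and then exhibit an explicit infinite family of manifolds to which it applies. By Corollary~\ref{main}, it suffices to produce infinitely many pairwise non-homeomorphic $3$-manifolds $Y$ that are simultaneously (a) links of rational surface singularities, (b) Seifert fibered over $S^2$, (c) atoroidal, and (d) endowed with infinite fundamental group; the universal tightness of $\xi_{can}$ and the impossibility of obtaining it by deforming a taut foliation are then immediate from Corollary~\ref{main}. The statement about Reebless foliations needs a small additional argument, which I treat at the end.

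For the family I would work directly with negative definite plumbing graphs, using the fact (\cite{gr}) that every such graph is the resolution dual graph of some normal surface singularity, together with Artin's criterion (\cite{a}) that rationality is detected on the graph. Concretely, for each integer $b_0\ge 3$ let $\Gamma_{b_0}$ be the star-shaped graph with a central vertex of weight $-b_0$ joined to three single-vertex legs of weights $-2,-3,-7$. Then $M(\Gamma_{b_0})$ is Seifert fibered over $S^2$ with three exceptional fibers of multiplicities $2,3,7$, so its base orbifold is hyperbolic; hence $\pi_1$ is infinite, and, having exactly three exceptional fibers over $S^2$ with nonzero orbifold Euler number, the manifold is atoroidal. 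The graphs are negative definite since the orbifold Euler number $e=-b_0+\tfrac12+\tfrac13+\tfrac17$ is negative, and the first homology orders $|H_1|=|42b_0-41|$ are distinct, so the $M(\Gamma_{b_0})$ are pairwise non-homeomorphic. The one genuine computation is rationality: for $b_0\ge 3$ Laufer's algorithm terminates at the reduced fundamental cycle $Z=\sum_v E_v$ (indeed $Z\cdot c=3-b_0\le 0$ and $Z\cdot E_v<0$ on each leg vertex), and a direct adjunction computation gives $p_a(Z)=1+\tfrac12(Z^2+Z\cdot K)=0$, so each $\Gamma_{b_0}$ is a rational graph and, by \cite{gr}, the link of a rational singularity.

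With the family in hand, Corollary~\ref{main} shows that each $M(\Gamma_{b_0})$ carries a universally tight contact structure that is not a deformation of a taut foliation, which already answers Question~$5$. For the Reebless case I would argue that these manifolds carry no co-oriented Reebless foliation at all. Each $M(\Gamma_{b_0})$ is a rational homology sphere (it is an $L$-space by Theorem~\ref{rat}) and, being a singularity link, is irreducible; so by Novikov's theorem the leaves of any co-oriented Reebless foliation are incompressible and every closed transversal is essential. Since the manifold is atoroidal it has no incompressible torus, hence no torus leaf, and an irreducible $\mathbb{Q}HS^3$ has no sphere leaf; by Goodman's theorem on dead-end components the foliation is then forced to be taut. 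But an $L$-space carries no taut foliation (Theorem~\ref{hom}), so no Reebless foliation can exist, and in particular $\xi_{can}$ is vacuously not a deformation of one. This answers Question~$4$.

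The main obstacle I anticipate is not the $3$-manifold topology but the two places where outside input must be applied with care: verifying rationality uniformly across the infinite family (the Laufer/Artin computation above is exactly what keeps the construction non-vacuous, since the ``nicest'' rational singularities---the Brieskorn hypersurface and quotient singularities---have links with \emph{finite} fundamental group), and the reduction ``atoroidal $\mathbb{Q}HS^3$ plus Reebless implies taut,'' which is what upgrades the taut conclusion to the Reebless conclusion demanded by Question~$4$. Everything else is bookkeeping with Corollary~\ref{main}.
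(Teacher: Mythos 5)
Your proof is correct and follows the same overall route as the paper: reduce to Corollary~\ref{main} and exhibit an explicit infinite family of atoroidal small Seifert fibered links of rational singularities with infinite fundamental group, checking rationality via the fundamental cycle and Artin's criterion, and disposing of the Reebless case by the observation that a Reebless foliation on an atoroidal manifold is taut. The differences are in execution and are worth recording. The paper uses the family $Y_p=Y(-2;\frac13,\frac23,\frac{p}{p+1})$, whose fundamental cycle $(1,2,3,\dots,3,2,1,1)$ requires a short computation, and it establishes infinite fundamental group indirectly, by citing that a singularity link has finite $\pi_1$ exactly when the singularity is a quotient singularity and checking against the Bhupal--Ono list; you instead take the $(2,3,7)$ star-shaped graphs with central weight $-b_0$, for which the fundamental cycle is reduced and Laufer's algorithm terminates immediately, and you get infinite $\pi_1$ directly from the hyperbolicity of the base orbifold ($\chi^{orb}=-\frac{1}{42}<0$), which is more self-contained and avoids the classification of quotient singularity links. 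You are also more careful than the paper on two points: you verify pairwise non-homeomorphism via $|H_1|=|42b_0-41|$ (the paper leaves the analogous check implicit), and you spell out the Novikov--Goodman argument behind ``atoroidal $+$ Reebless $\Rightarrow$ taut,'' which the paper states in one line; in fact you prove the slightly stronger statement that these manifolds admit no Reebless foliation at all. All the individual verifications in your writeup (negative definiteness from $e<0$ with legs in normal form, $Z\cdot Z+Z\cdot K=-2$, atoroidality of small Seifert fibered spaces with $e\neq 0$) check out.
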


\begin{proof} 

It is known (cf. \cite{dim}) that the link of a complex surface singularity has finite
fundamental group if and only if it is a quotient singularity. Thus the link of a
rational but not quotient surface singularity has an infinite fundamental group. Note
that the links of a quotient surface singularities (all small Seifert fibered
$3$-manifolds) are explicitly listed in \cite{bho} via their dual resolution graphs.
It is easy to see that there are many infinite families of small Seifert fibered
$3$-manifolds which are links of rational but not quotient surface singularities. This
finishes the proof using Corollary~\ref{main} since all small Seifert fibered
$3$-manifolds are known to be atoroidal.  Note that on an atoroidal $3$-manifold, a Reebless foliation is taut.
\end{proof} 

Consequently,
Corollary~\ref{app} answers Questions 4 and 5 of Etnyre \cite{etn} negatively. For the sake of
completeness we give an infinite family of counterexamples. The small Seifert fibered
$3$-manifold  $$Y_p= Y(-2; \frac{1}{3}, \frac{2}{3}, \frac{p}{p+1})$$ can be described by the surgery 
diagram depicted in Figure~\ref{seif}, where $p$ is a positive integer.
Note that $Y_p$ is the link of a complex surface singularity whose dual resolution graph is given in Figure~\ref{yprat}. 
\begin{figure}[ht]
  \relabelbox \small {\epsfxsize=2.5in
  
\centerline{\epsfbox{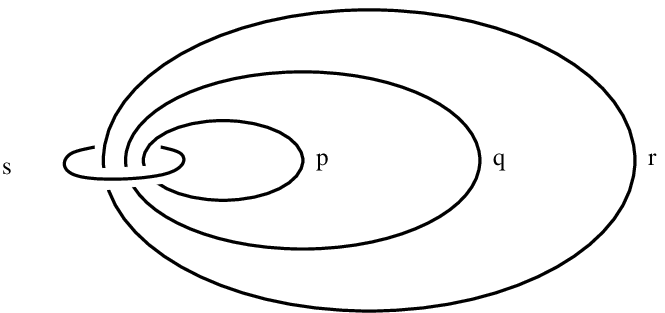}}}

\relabel{p}{$-3$}

\relabel{q}{$-\frac{3}{2}$}

\relabel{r}{$-\frac{p+1}{p}$}

\relabel{s}{$-2$}

\endrelabelbox
        \caption{Rational surgery diagram for $Y_p$ } \label{seif}
 \end{figure}

\begin{figure}[ht]

  \relabelbox \small {\epsfxsize=2.5in
  \centerline{\epsfbox{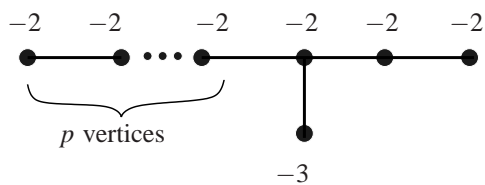}}}

\relabel{a}{$-2$}

\relabel{b}{$-2$}

\relabel{c}{$-2$}

\relabel{d}{$-3$}

\relabel{e}{$-2$} \relabel{f}{$-2$} \relabel{g}{$-2$}

\relabel{r}{$p$ vertices}

\endrelabelbox
           \caption{Dual resolution graph} \label{yprat}
 \end{figure}

Let $(X,x)$ be a germ of a complex surface singularity. Fix a resolution
$\pi\colon \tilde X\to X$ and denote the irreducible
components of the exceptional divisor $E=\pi^{-1}(x)$ by
$\bigcup_{i=1}^n E_i$. The {\em fundamental cycle} of $E$ is
by definition the componentwise smallest nonzero effective divisor $Z=\sum z_i E_i$
satisfying $Z\cdot E_i\leq 0$ for all $1 \leq i \leq n$.  It turns out that the singularity $(X,x)$
is  rational if each irreducible component $E_i$ of the
exceptional divisor $E$ is isomorphic to $\bfc P^1$ and
$$ Z\cdot Z+\sum_{i=1}^n z_i(-E_i^2-2)= -2, $$
where $Z=\sum z_iE_i$ is the fundamental cycle of $E$.

Enumerate the vertices in the dual resolution graph for $Y_p$ from left to right along the
top row with the bottom vertex coming last (see Figure~\ref{yprat}).  It is then easy to check  (cf. \cite{bo})
that the coefficients $(z_1, z_2, \ldots,  z_n)$ of the corresponding fundamental cycle 
is given by $(1,2,3,3,\ldots,3,3,2,1,1)$.  It follows that  $Y_p$ is
the link of a rational surface singularity and hence it is an 
L-space. We conclude that the
canonical contact structure $\xi_{can} $ on $Y_p$ is universally tight but it can not
be obtained by perturbing  a taut foliation.  Moreover, if $p \geq 2$, then $Y_p$ is not a quotient singularity 
\cite{bho} and thus its fundamental group is infinite.  

%On the other hand, $Y_1$ has finite fundamental group since it is the link of a quotient singularity \cite{bho}.

\v  \noindent {\bf {Acknowledgement}}: We would like to thank Tolga Etg\"u for helpful
conversations, Patrick Massot for his comments on a draft of this paper and the Mathematical Sciences Research Institute for its hospitality during the \textit{Symplectic and Contact Geometry and Topology} program 2009/2010. B.O. was partially supported by the BIDEP-2219 research grant of the Scientific and Technological
Research Council of Turkey and the Marie Curie International Outgoing Fellowship 236639. 

\bibliographystyle{amsplain}
\providecommand{\bysame}{\leavevmode\hbox
to3em{\hrulefill}\thinspace}
\providecommand{\MR}{\relax\ifhmode\unskip\space\fi MR }

\providecommand{\MRhref}[2]{%
  \href{http://www.ams.org/mathscinet-getitem?mr=#1}{#2}
} \providecommand{\href}[2]{#2}

\begin{flushleft}{\small{
%\emph{Author's address:}%
Mathematical Sciences Research Institute, Berkeley, CA 94720 \\ 
\verb"ylekili@msri.org"

Mathematical Sciences Research Institute, Berkeley, CA 94720 \\ 
Department of Mathematics, Ko\c{c} University, Istanbul, Turkey \\
\verb"bozbagci@ku.edu.tr"

}}

\end{flushleft}
\end{document}